\newtheorem{theorem}{Theorem}[section]
\newtheorem{lemma}{Lemma}[section]
\newtheorem{remark}{Remark}[section]
\newtheorem{corollary}{Corollary}[section]
\numberwithin{equation}{section}
\begin{document}
\title{On the P\'olya-Szeg\"o operator inequality}
\author{Trung Hoa Dinh, Hamid Reza Moradi, and Mohammad Sababheh}
\subjclass[2010]{Primary 47A63, Secondary 46L05, 47A60, 47A30.}
\keywords{Operator inequality, P\'olya-Szeg\"o inequality, operator monotone, operator mean, positive linear map.}
\maketitle
\begin{abstract}
In this paper, we  present generalized P\'olya-Szeg\"o type inequalities for positive invertible operators on a Hilbert space for arbitrary operator means between the arithmetic and the harmonic means. As applications, we present Operator Gr\"uss, Diaz--Metcalf and Klamkin--McLenaghan inequalities. 
\end{abstract}

%------------------------------------------------------------------------------------%
\pagestyle{myheadings}
\markboth{\centerline {D.T. Hoa, H.R. Moradi \& M. Sababheh}}
{\centerline {On the P\'olya-Szeg\"o operator inequality}}
%\bigskip
%\bigskip
%------------------------------------------------------------------------------------%
%------------------------------------------------------------------------------------%
\section{ Introduction }
Let $\Phi $ be a positive linear map on $\mathcal{B}\left( \mathcal{H} \right)$; the algebra of all bounded linear operators on a Hilbert space $\mathcal{H}$. Ando \cite{8} proved the inequality
\begin{equation}\label{13}
\Phi \left( A\sharp B \right)\le \Phi \left( A \right)\sharp\Phi \left( B \right),
\end{equation}
for any positive linear map $\Phi $ and positive operators $A, B$, where ``$\sharp$'' is the geometric mean in the sense of Kubo-Ando theory \cite{10}.  That is,
\[A\sharp B={{A}^{\frac{1}{2}}}{{\left( {{A}^{-\frac{1}{2}}}B{{A}^{-\frac{1}{2}}} \right)}^{\frac{1}{2}}}{{A}^{\frac{1}{2}}}.\]
Speaking of means, the arithmetic mean   $A\nabla B$  and  the harmonic  mean   $A!B$ of two invertible positive operators $A,B\in\mathcal{B}(\mathcal{H})$ are  defined,  respectively,  by
\[A\nabla B=\frac{A+B}{2}\quad\text{ and }\quad A!B={{\left( \frac{{{A}^{-1}}+{{B}^{-1}}}{2} \right)}^{-1}}.\]
It is well known that $A!B\le A\sharp B\le A\nabla B$. In fact, if $\sigma$ is a symmetric operator mean (in the sense that $A\sigma B=B\sigma A$), then $A!B\leq A\sigma B\leq A\nabla B,$ for the invertible positive operators $A,B\in\mathcal{B}(\mathcal{H})$.

The operator P\'olya-Szeg\"o inequality presents a converse of Ando's inequality \eqref{13}, as follows.
\begin{theorem}
Let $\Phi $ be a positive linear map and $A,B\in \mathcal{B}\left( \mathcal{H} \right)$ be such that $mI\le A,B\le MI$ for some scalars $0<m<M$ ($I$ stands for the identity operator). Then
\begin{equation}\label{05}
\Phi \left( A \right)\sharp\Phi \left( B \right)\le \frac{M+m}{2\sqrt{Mm}}\Phi \left( A\sharp B \right).
\end{equation}
\end{theorem}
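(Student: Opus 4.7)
The plan is to reduce the target to a pointwise (operator) reverse arithmetic--geometric inequality for $A$ and $B$ themselves, and then to push it through $\Phi$ using that $\Phi$ commutes with the arithmetic mean together with the operator AM-GM bound $\Phi(A)\sharp\Phi(B)\le\Phi(A)\nabla\Phi(B)$ (the right end of the chain $A!B\le A\sigma B\le A\nabla B$ recalled in the introduction). Concretely, it suffices to prove the stronger statement
\[
A\nabla B\;\le\;\frac{M+m}{2\sqrt{Mm}}\,(A\sharp B),\qquad mI\le A,B\le MI,
\]
because applying $\Phi$ and using $\Phi(A)\sharp\Phi(B)\le\Phi(A)\nabla\Phi(B)=\Phi(A\nabla B)$ then yields \eqref{05} directly.

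To establish this key operator inequality I would introduce $C:=A^{-1/2}BA^{-1/2}$, for which by definition
\[
A\sharp B=A^{1/2}C^{1/2}A^{1/2},\qquad A\nabla B=A^{1/2}\,\frac{I+C}{2}\,A^{1/2}.
\]
From $mI\le B\le MI$ and $\tfrac{1}{M}I\le A^{-1}\le\tfrac{1}{m}I$ one obtains the two-sided spectral enclosure $\tfrac{m}{M}I\le C\le\tfrac{M}{m}I$. Conjugating by $A^{-1/2}$ reduces the claim to the single-variable operator inequality
\[
\frac{I+C}{2}\;\le\;\frac{M+m}{2\sqrt{Mm}}\,C^{1/2},
\]
and by functional calculus this is equivalent to the scalar bound $\tfrac{1+t}{2}\le\tfrac{M+m}{2\sqrt{Mm}}\sqrt{t}$ for $t\in[m/M,M/m]$. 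Squaring puts this in the form $\varphi(t):=\tfrac{(1+t)^{2}}{t}=t+2+\tfrac{1}{t}\le\tfrac{(M+m)^{2}}{Mm}$; since $\varphi$ is convex on $(0,\infty)$ and $\varphi(m/M)=\varphi(M/m)=(M+m)^{2}/(Mm)$, the estimate holds on the whole interval $[m/M,M/m]$.

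I expect the main delicate point to be the clean transition from the scalar estimate back to the operator level: one must use that $C^{1/2}$, $C^{-1/2}$ and $I$ all commute (being functions of $C$) when rewriting the intermediate inequality, and one must verify that the enclosure $\tfrac{m}{M}I\le C\le\tfrac{M}{m}I$ follows genuinely from the \emph{two-sided} bounds on both $A$ and $B$ rather than from mere norm estimates. Once these steps are secured, linearity of $\Phi$ and the elementary operator AM-GM $\Phi(A)\sharp\Phi(B)\le\Phi(A)\nabla\Phi(B)$ stitch everything together and deliver \eqref{05}.
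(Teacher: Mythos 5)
Your proof is correct and is essentially the paper's own approach: the key reverse AM--GM bound $A\nabla B\le \frac{M+m}{2\sqrt{Mm}}\,A\sharp B$ is exactly the left-hand inequality of Lemma \ref{e3} (with $s=\frac{m}{M}$, $t=\frac{M}{m}$), which the paper also proves by a functional-calculus reduction to a scalar estimate on the spectrum of $A^{-1/2}BA^{-1/2}$. Pushing it through $\Phi$ via $\Phi(A)\sharp\Phi(B)\le\Phi(A)\nabla\Phi(B)=\Phi(A\nabla B)$ is precisely the first chain in the paper's proof of Theorem \ref{16} specialized to $f(x)=x$ and $\tau=\sharp$.
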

The inequality \eqref{05} was first proved in \cite[Theorem 4]{9} under the sandwich condition $sA\le B\le tA$ $\left( 0<s\le t \right)$ for matrices (see also \cite{bo}). 

Recall that a continuous real-valued function $f$ defined on an interval $J$ is said to be  operator monotone if $A\le B$ implies  $f\left( A \right)\le f\left( B \right)$ for all self-adjoint operators $A,B$ with spectra in $J$.  Very recently, Hoa et al. \cite[Theorem 2.12]{01} proved the following converse of \eqref{13} that extends \eqref{05}.
\begin{theorem}\label{19}
Let $\Phi $ be a positive linear map, $f$ be an operator monotone function on $[0,\infty)$, $\tau ,\sigma $ operator means such that $!\le \tau ,\sigma \le \nabla $, and $0<m<M$. Then for any positive operators $A,B$ satisfying $mI\le A,B\le MI$, the following inequality holds
\begin{equation}\label{06}
f\left( \Phi \left( A \right) \right)\tau f\left( \Phi \left( B \right) \right)\le \frac{{{\left( M+m \right)}^{2}}}{4Mm}f\left( \Phi \left( A\sigma B \right) \right).
\end{equation}
\end{theorem}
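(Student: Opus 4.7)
The strategy is to chain four elementary bounds: dominate $\tau$ by $\nabla$, move the arithmetic mean inside $f$ using operator concavity, swap $\nabla$ for $!$ via the operator Kantorovich--P\'olya-Szeg\"o estimate, then pass from $!$ to $\sigma$ using $!\leq\sigma$, combined with operator monotonicity of $f$ and a homogeneity bound.

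First, since $\tau\leq\nabla$, I would begin with
\[
f(\Phi(A))\,\tau\,f(\Phi(B)) \;\leq\; f(\Phi(A))\,\nabla\, f(\Phi(B)) \;=\; \tfrac{1}{2}\bigl(f(\Phi(A))+f(\Phi(B))\bigr).
\]
Since an operator monotone function on $[0,\infty)$ is operator concave, midpoint concavity applied to the positive operators $\Phi(A),\Phi(B)$ yields
\[
\tfrac{1}{2}\bigl(f(\Phi(A))+f(\Phi(B))\bigr) \;\leq\; f\!\bigl(\tfrac{1}{2}(\Phi(A)+\Phi(B))\bigr) \;=\; f(\Phi(A\nabla B)).
\]

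Next, I would appeal to the operator Kantorovich--P\'olya-Szeg\"o inequality: when $mI\leq A,B\leq MI$,
\[
A\nabla B \;\leq\; \frac{(M+m)^2}{4Mm}\,(A!B) \;\leq\; \frac{(M+m)^2}{4Mm}\,(A\sigma B),
\]
the second step using $!\leq\sigma$. Applying the positive linear map $\Phi$ (which is order-preserving) and then the operator monotone $f$ gives
\[
f(\Phi(A\nabla B)) \;\leq\; f\!\left(\tfrac{(M+m)^2}{4Mm}\,\Phi(A\sigma B)\right).
\]

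Finally, to extract the Kantorovich constant $K=\frac{(M+m)^2}{4Mm}\geq 1$ from inside $f$, I would invoke that operator concavity of $f$ combined with $f(0)\geq 0$ yields $f(tX)\geq tf(X)$ for every $t\in[0,1]$, hence $f(KX)\leq Kf(X)$ for $K\geq 1$. Chaining the four bounds produces the claimed inequality. The step I expect to be most delicate is this last homogeneity estimate, since it tacitly uses the nonnegativity $f(0)\geq 0$; this is standard in the operator-monotone-on-$[0,\infty)$ setting, but if it is not built in one can reduce to that case by replacing $f$ with $f-f(0)$ and restoring the constant at the end.
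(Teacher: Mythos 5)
Your argument is correct, but note that this paper never actually proves Theorem \ref{19}: it is quoted from \cite[Theorem 2.12]{01}, so there is no in-paper proof to compare against line by line. The natural comparison is with the paper's proof of its own generalization, Theorem \ref{16}, and your proof follows the same template with the roles of $f$ and $\Phi$ exchanged: dominate $\tau$ by $\nabla$; use operator concavity of the operator monotone function to absorb the arithmetic mean into $f$ (you apply midpoint concavity to $\Phi(A),\Phi(B)$ and then use linearity of $\Phi$, whereas in Theorem \ref{16} concavity is applied to $A,B$ inside $\Phi$); descend from $\nabla$ to $!$ at the cost of the Kantorovich constant; pull the constant out of $f$ via $f(\alpha t)\le\alpha f(t)$ for $\alpha\ge1$, which is exactly the paper's Lemma \ref{6}(a); and finish with $!\le\sigma$. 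Your intermediate chain $A\nabla B\le\frac{M+m}{2\sqrt{Mm}}A\sharp B\le\frac{(M+m)^2}{4Mm}A!B$ is precisely the paper's Lemma \ref{e3} specialized to $s=\frac{m}{M}$, $t=\frac{M}{m}$ (so $st=1$), as recorded in the remark following that lemma. So the proposal is a correct proof, and it is in the same spirit as the paper's own method rather than a genuinely different route.

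Two small points on your closing caveat. First, the homogeneity step does need $f(0)\ge0$, but this is not a genuine gap: nonnegativity of $f$ is implicit in the statement, since the Kubo--Ando mean $f(\Phi(A))\,\tau\,f(\Phi(B))$ is only defined for positive operators, and the paper's Lemma \ref{6}(a) is likewise stated for $f:[0,\infty)\to[0,\infty)$. Second, your proposed fallback of replacing $f$ by $f-f(0)$ and ``restoring the constant at the end'' would not go through as stated, because $\tau$ is not additive; it is cleaner simply to observe that $f\ge0$ is forced by the hypotheses.
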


The first target of this article is to present a generalized form of P\'olya-Szeg\"o  inequality. In particular, we present relations between 
$$\Phi \left( f\left( A \right) \right)\tau \Phi \left( f\left( B \right) \right)\;\;{\text{and}}\;\; \Phi \left( f\left( A\sigma B \right) \right)$$ under the sandwich condition $sA\leq B\leq tA,$ for the operator monotone function $f$  and the symmetric operator means $\sigma,\tau$. Similar discussion will be presented for operator monotone decreasing functions. See Theorem \ref{16} below for the exact statements.

\section{Main results}
In this section we present relations between 
$$\Phi \left( f\left( A \right) \right)\tau \Phi \left( f\left( B \right) \right)\;\;{\text{and}}\;\; \Phi \left( f\left( A\sigma B \right) \right)$$
as generalized forms of P\'olya-Szeg\"o  inequality. Then we show some applications including Gr\"uss,  Diaz--Metcalf  and  Klamkin--McLenaghan type inequalities.

The first main result in this direction will be presented in Theorem \ref{16} below. However, we will need some lemmas first.

\begin{lemma}\label{e3}
Let $A,B\in \mathcal{B}\left( \mathcal{H} \right)$ such that $sA\le B\le tA$ for some scalars $0<s\le t$.
\begin{itemize}
	\item[(a)] If $st\ge 1$, then
	\begin{equation}\label{12}
	\frac{2}{\sqrt{s}+\sqrt{t}}A\nabla B\le A\sharp B\le \frac{\sqrt{s}+\sqrt{t}}{2}A!B.
	\end{equation}
	\item[(b)] If $st\le 1$, then
	\begin{equation}\label{0012}
	\frac{2\sqrt{st}}{\sqrt{s}+\sqrt{t}}A\nabla B\le A\sharp B\le \frac{\sqrt{s}+\sqrt{t}}{2\sqrt{st}}A!B.
	\end{equation}
\end{itemize}
\end{lemma}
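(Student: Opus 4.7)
The plan is to pass to the scalar picture via the Kubo--Ando transformer equality. Setting $C := A^{-1/2}BA^{-1/2}$, the hypothesis $sA\le B\le tA$ becomes $sI\le C\le tI$, and the three means admit the clean representations
\[ A\nabla B = A^{1/2}\tfrac{I+C}{2}A^{1/2},\quad A\sharp B = A^{1/2}C^{1/2}A^{1/2},\quad A!B = A^{1/2}\tfrac{2C}{I+C}A^{1/2}. \]
Conjugating each of the four operator inequalities in \eqref{12}--\eqref{0012} by $A^{-1/2}$ on both sides therefore reduces them, via the spectral theorem applied to $C$, to pointwise scalar inequalities in the single variable $x\in [s,t]$.

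Next I would observe that in each part the two displayed operator inequalities actually collapse to a \emph{single} scalar inequality. For part (a), the upper bound $\sqrt{x}\le \tfrac{\sqrt{s}+\sqrt{t}}{2}\cdot\tfrac{2x}{1+x}$ rearranges (after multiplying by $(1+x)/\sqrt{x}>0$) to $1+x\le(\sqrt{s}+\sqrt{t})\sqrt{x}$, which is exactly the lower bound $\tfrac{1+x}{\sqrt{s}+\sqrt{t}}\le\sqrt{x}$ cleared of denominators. The analogous bookkeeping handles part (b), where both inequalities rearrange to $\sqrt{st}\,(1+x)\le (\sqrt{s}+\sqrt{t})\sqrt{x}$. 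So it suffices to prove one scalar inequality in each case.

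Finally I would verify these scalar inequalities by a concavity argument. For part (a), set $g(x)=(\sqrt{s}+\sqrt{t})\sqrt{x}-(1+x)$; then $g''(x)<0$ so $g$ is concave on $[s,t]$, and a direct computation gives $g(s)=g(t)=\sqrt{st}-1$, which is nonnegative precisely when $st\ge 1$. Concavity then yields $g(x)\ge\min\{g(s),g(t)\}\ge 0$ for all $x\in[s,t]$. For part (b), set $h(x)=(\sqrt{s}+\sqrt{t})\sqrt{x}-\sqrt{st}(1+x)$; one computes $h(s)=s(1-\sqrt{st})$ and $h(t)=t(1-\sqrt{st})$, both nonnegative when $st\le 1$, and $h$ is again concave on $[s,t]$ by the same second-derivative computation.

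The main obstacle is spotting the equivalence of the two displayed inequalities in each part; once that is noticed, only one scalar inequality must be checked, and its boundary values conveniently collapse to the single quantity $\sqrt{st}-1$ (respectively to $s(1-\sqrt{st})$ and $t(1-\sqrt{st})$), whose sign is dictated precisely by the dichotomy $st\ge 1$ versus $st\le 1$. Everything else is routine functional calculus together with the concavity of $\sqrt{\,\cdot\,}$.
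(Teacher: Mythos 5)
Your proposal is correct and follows essentially the same route as the paper: both reduce the operator inequalities to scalar inequalities on $[s,t]$ via functional calculus, and the single inequality you isolate, $1+x\le(\sqrt{s}+\sqrt{t})\sqrt{x}$ (resp.\ $\sqrt{st}(1+x)\le(\sqrt{s}+\sqrt{t})\sqrt{x}$), is exactly the paper's inequality \eqref{3}, with its companion \eqref{03} being the same statement under $x\mapsto 1/x$. Your two refinements --- observing that the arithmetic-side and harmonic-side bounds collapse to one scalar inequality, and replacing the paper's ``straightforward to see'' endpoint bound on $\frac{x+1}{2\sqrt{x}}$ by an explicit concavity-plus-endpoint argument --- are welcome tightenings of the same proof rather than a different method.
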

\begin{proof}
By appealing to functional calculus, it suffices to show the corresponding scalar inequalities. We define $f\left( x \right):=\frac{x+1}{2\sqrt{x}}$ where $0<s\le x\le t$. It is straightforward to see that
$$f\left( x \right)\le \frac{1}{2}\max \left\{ \sqrt{s}+\frac{1}{\sqrt{s}},\sqrt{t}+\frac{1}{\sqrt{t}} \right\}.$$
Consequently, 
\begin{equation}\label{3}
\frac{x+1}{2}\le \left\{ \begin{array}{lr}
\frac{\sqrt{s}+\sqrt{t}}{2}\sqrt{x}&\text{ if }st\ge 1 \\ 
\frac{\sqrt{s}+\sqrt{t}}{2\sqrt{st}}\sqrt{x}&\text{ if }st\le 1 \\ 
\end{array} \right.
\end{equation}
for $0<s\le x\le t$, and
\begin{equation}\label{03}
\frac{{1}/_{x}+1}{2}\le \left\{ \begin{array}{lr}
\frac{\sqrt{s}+\sqrt{t}}{2}\frac{1}{\sqrt{x}}&\text{ if }st\ge 1 \\ 
\frac{\sqrt{s}+\sqrt{t}}{2\sqrt{st}}\frac{1}{\sqrt{x}}&\text{ if }st\le 1 \\ 
\end{array} \right.
\end{equation}
for $0<\frac{1}{t}\le \frac{1}{x}\le \frac{1}{s}$. Now, if $0<s\le x\le t$, the inequalities \eqref{3} and  \eqref{03} imply
\[\frac{2}{\sqrt{s}+\sqrt{t}}\left( \frac{x+1}{2} \right)\le \sqrt{x}\le \frac{\sqrt{s}+\sqrt{t}}{2}{{\left( \frac{{1}/{x}\;+1}{2} \right)}^{-1}}\]
whenever $st\ge 1$, and
\[\frac{2\sqrt{st}}{\sqrt{s}+\sqrt{t}}\left( \frac{x+1}{2} \right)\le \sqrt{x}\le \frac{\sqrt{s}+\sqrt{t}}{2\sqrt{st}}{{\left( \frac{{1}/{x}\;+1}{2} \right)}^{-1}}\]
whenever $st\le 1$. This completes the proof of the lemma. 
\end{proof}
\begin{remark}
The substitution  of $s=\frac{m}{M}$ and $t=\frac{M}{m}$ in Lemma \ref{e3} implies the celebrated result \cite[Theorem 13]{1}
\[\frac{2\sqrt{Mm}}{M+m}A\nabla B\le A\sharp B\le \frac{M+m}{2\sqrt{Mm}}A!B.\] 
\end{remark}

The next elementary lemma is given  for completeness.
\begin{lemma}\label{6}
	Let $\alpha \ge 1$.
	\begin{itemize}
		\item[(a)] If $f:[0,\infty)\to [0,\infty)$ is an operator monotone function, then
		\begin{equation*}
		f\left( \alpha t \right)\le \alpha f\left( t \right).	
		\end{equation*}
		\item[(b)] If $g:[0,\infty)\to [0,\infty)$ is an operator monotone decreasing function, then
		\begin{equation*}
		g\left( \alpha t \right)\ge \frac{1}{\alpha }g\left( t \right).
		\end{equation*}
	\end{itemize}
\end{lemma}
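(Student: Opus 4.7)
The plan for part (a) is to invoke the classical fact that every non-negative operator monotone function $f$ on $[0,\infty)$ is, in particular, concave on $[0,\infty)$ as a scalar function (a standard consequence of L\"owner's theorem, which represents $f$ as $\gamma+\beta t+\int_0^\infty \tfrac{(1+s)t}{t+s}\,d\nu(s)$). Given concavity together with $f(0)\ge 0$, the stated inequality drops out in one line: for $\alpha\ge 1$, write $t=\tfrac{1}{\alpha}(\alpha t)+\bigl(1-\tfrac{1}{\alpha}\bigr)\cdot 0$ and apply concavity to get
\[
f(t)\;\ge\;\tfrac{1}{\alpha}\,f(\alpha t)+\bigl(1-\tfrac{1}{\alpha}\bigr)f(0)\;\ge\;\tfrac{1}{\alpha}\,f(\alpha t),
\]
which rearranges to $f(\alpha t)\le\alpha f(t)$. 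Equivalently, one may just note that $t\mapsto f(t)/t$ is non-increasing on $(0,\infty)$, so $f(\alpha t)/(\alpha t)\le f(t)/t$ for $\alpha\ge 1$, giving the same conclusion without explicitly invoking concavity.

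For part (b), the plan is to reduce to (a) by a reciprocal substitution. If $g:(0,\infty)\to(0,\infty)$ is operator monotone decreasing, then $h(t):=g(1/t)$ is operator monotone \emph{increasing}: for positive invertibles $A\le B$ one has $B^{-1}\le A^{-1}$, and the decreasing monotonicity of $g$ reverses this to $g(A^{-1})\le g(B^{-1})$, i.e.\ $h(A)\le h(B)$. Applying part (a) to $h$ at the point $s=1/(\alpha t)$ with factor $\alpha\ge 1$ yields $h(1/t)\le\alpha\,h(1/(\alpha t))$, which unpacks to $g(t)\le\alpha\,g(\alpha t)$, i.e.\ $g(\alpha t)\ge\tfrac{1}{\alpha}\,g(t)$.

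I do not anticipate any real obstacle. The only point requiring mild care is the behaviour of $g$ at $t=0$, where $g$ may be singular; this is handled by working on $(0,\infty)$ throughout and, if a statement at $t=0$ is wanted, passing to the limit. The heart of the matter is the concavity of non-negative operator monotone functions on $[0,\infty)$, after which both halves become one-line consequences with the $t\mapsto 1/t$ duality bridging the increasing and decreasing cases.
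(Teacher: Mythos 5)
Your proof is correct, but there is nothing in the paper to compare it against: the authors state this lemma as ``elementary'' and ``given for completeness'' and supply no proof at all, so your argument genuinely fills a gap rather than paralleling an existing one. Part (a) rests on the standard fact (L\"owner/Hansen--Pedersen) that an operator monotone function on $[0,\infty)$ is operator, hence scalar, concave; combined with $f(0)\ge 0$ this gives $f(t)\ge \tfrac{1}{\alpha}f(\alpha t)+\bigl(1-\tfrac{1}{\alpha}\bigr)f(0)\ge\tfrac{1}{\alpha}f(\alpha t)$, exactly as you wrote. Your reduction of (b) to (a) via $h(t)=g(1/t)$ is also sound: $h$ is operator monotone increasing on $(0,\infty)$ and non-negative, hence bounded below near $0$, so the concavity argument applies on $(0,\infty)$ with $\liminf_{t\to 0^{+}}h(t)\ge 0$ playing the role of $h(0)$, and the substitution $u=1/(\alpha t)$ unwinds to $g(\alpha t)\ge\tfrac{1}{\alpha}g(t)$; at $t=0$ the claim is trivial since $g(0)\ge\tfrac{1}{\alpha}g(0)$. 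An equally quick alternative for (b), when $g>0$, is to apply (a) to the operator monotone increasing function $1/g$, avoiding the domain bookkeeping at $0$ altogether. One point worth making explicit, since it shows your use of operator monotonicity is not decorative: both inequalities fail for general monotone non-negative functions (take $f(t)=e^{t}-1$ in (a), or $g(t)=e^{-t}$ in (b), with $\alpha=2$ and $t$ large), so any correct proof must pass through the concavity that operator monotonicity provides, as yours does.
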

Now we are ready to prove the first main result,.

\begin{theorem}\label{16}
	Let $\Phi $ be a positive linear map, $\tau ,\sigma $ operator means such that $!\le \tau ,\sigma \le \nabla $, and let $A,B\in \mathcal{B}\left( \mathcal{H} \right)$ such that $s A \leq B \leq t A$ for some scalars $0 < s \leq t$.
	\begin{itemize}
		\item[(i)] If $f$ is an operator monotone increasing function on $[0,\infty)$, then
		\begin{equation}\label{2}
		\Phi \left( f\left( A \right) \right)\tau \Phi \left( f\left( B \right) \right)\le \left(\frac{\sqrt{s}+\sqrt{t}}{2}\right)^2 \Phi \left( f\left( A\sigma B \right) \right)
		\end{equation}
		whenever $st\ge 1$, and
		\[\Phi \left( f\left( A \right) \right)\tau \Phi \left( f\left( B \right) \right)\le {{\left( \frac{\sqrt{s}+\sqrt{t}}{2\sqrt{st}} \right)}^{2}}\Phi \left( f\left( A\sigma B \right) \right)\]
		whenever $st\le 1$.
		\item[(ii)] If $g$ is an operator monotone decreasing function on $[0,\infty)$, then
		\begin{equation}\label{4}
		\Phi \left( g\left( A\tau B \right) \right)\le \left(\frac{\sqrt{s}+\sqrt{t}}{2}\right)^2\Phi \left( g\left( A \right) \right)\sigma \Phi \left( g\left( B \right) \right)
		\end{equation}
		whenever $st\ge 1$, and
		\[\Phi \left( g\left( A\tau B \right) \right)\le {{\left( \frac{\sqrt{s}+\sqrt{t}}{2\sqrt{st}} \right)}^{2}}\left( \Phi \left( g\left( A \right) \right)\sigma \Phi \left( g\left( B \right) \right) \right)\]
		whenever $st\le 1$.
	\end{itemize}
\end{theorem}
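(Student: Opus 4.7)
The plan is to reduce both parts to the scalar inequalities of Lemma~\ref{e3}, combined with Lemma~\ref{6} and either concavity (for part (i)) or a dual counterpart (for part (ii)).

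For part (i), case $st\geq 1$, I would chain the two halves of Lemma~\ref{e3}(a) to obtain $A\nabla B \leq C\cdot A\sharp B \leq C^{2}A!B$, where $C=(\sqrt{s}+\sqrt{t})/2$, and then use $\sigma\geq {!}$ to replace $A!B$ by $A\sigma B$, giving $A\nabla B\leq C^{2}A\sigma B$. Since $st\geq 1$ forces $C^{2}\geq 1$, Lemma~\ref{6}(a) applied via functional calculus together with operator monotonicity of $f$ yields $f(A\nabla B)\leq f(C^{2}A\sigma B)\leq C^{2}f(A\sigma B)$. By Loewner's theorem a nonnegative operator monotone function on $[0,\infty)$ is operator concave, so $f(A)\nabla f(B)\leq f(A\nabla B)$. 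Combining these gives $f(A)\nabla f(B)\leq C^{2}f(A\sigma B)$; applying $\Phi$ (which commutes with $\nabla$ by linearity) and finally invoking $\tau\leq \nabla$ delivers
\[
\Phi(f(A))\,\tau\,\Phi(f(B)) \;\leq\; \Phi(f(A))\,\nabla\,\Phi(f(B)) \;\leq\; C^{2}\Phi(f(A\sigma B)).
\]
The case $st\leq 1$ is identical with Lemma~\ref{e3}(b) in place of (a) and $C'=(\sqrt{s}+\sqrt{t})/(2\sqrt{st})$ in place of $C$; one checks $(C')^{2}\geq 1/\sqrt{st}\geq 1$ so Lemma~\ref{6}(a) still applies.

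For part (ii) I would run the dual argument. From $A\nabla B\leq C^{2}A!B$ and $\tau\geq {!}$ we get $A\tau B\geq C^{-2}A\nabla B$. Applying the decreasing $g$ and the rescaled form of Lemma~\ref{6}(b) gives $g(A\tau B)\leq g(C^{-2}A\nabla B)\leq C^{2}g(A\nabla B)$. Next I would invoke the fact that, for a positive operator monotone decreasing function $g$ on $(0,\infty)$, the reciprocal $1/g$ is operator monotone increasing (a standard consequence of the Loewner/Stieltjes representation) and hence operator concave; this yields $g(A\nabla B)\leq g(A)\,{!}\,g(B)$, and $\sigma\geq {!}$ then gives $g(A)\,{!}\,g(B)\leq g(A)\sigma g(B)$. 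So $g(A\tau B)\leq C^{2}(g(A)\sigma g(B))$. Applying $\Phi$ and then the Kubo--Ando transformer inequality $\Phi(X\sigma Y)\leq \Phi(X)\sigma\Phi(Y)$ finishes the proof; the $st\leq 1$ case uses $C'$ instead of $C$.

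The main obstacle is part (ii): a naive route using only operator convexity of $g$ would give $g(A\nabla B)\leq g(A)\nabla g(B)$, which after $\Phi$ yields $\Phi(g(A))\nabla\Phi(g(B))$ on the right, strictly larger than the claimed $\Phi(g(A))\sigma\Phi(g(B))$ when $\sigma<\nabla$, and $\sigma\leq \nabla$ pushes in the wrong direction to recover the sharper bound. Obtaining the stated $\sigma$-version therefore forces use of the stronger operator concavity of $1/g$ (equivalently, the inequality $g(A\nabla B)\leq g(A)\,{!}\,g(B)$), which genuinely goes beyond plain operator convexity of $g$.
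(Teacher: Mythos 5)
Your proof is correct. Part (i) is essentially the paper's own argument: operator concavity of $f$ (your appeal to L\"owner's theorem is the paper's citation \cite[Corollary 1.12]{2}), Lemma \ref{e3}, Lemma \ref{6}(a), and the bounds $\tau\le\nabla$, $!\le\sigma$; the only cosmetic difference is that you compose the two halves of Lemma \ref{e3} at the operator level and apply Lemma \ref{6}(a) once with $\alpha=C^{2}$, while the paper passes through $A\sharp B$ and applies it twice with $\alpha=C$ (your checks that $C^{2}\ge 1$ when $st\ge 1$ and $(C')^{2}\ge (st)^{-1/2}\ge 1$ when $st\le 1$ are exactly what is needed). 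Part (ii), however, takes a genuinely different route at the key step. The paper quotes the Ando--Hiai theorem \cite[Theorem 2.1]{3} as a black box to obtain $g(A\nabla B)\le g(A)\sigma g(B)$, and again sandwiches through the geometric mean with two applications of Lemma \ref{6}(b). You instead prove the strongest (harmonic-mean) form $g(A\nabla B)\le g(A)\,!\,g(B)$ from scratch: for $g>0$ operator monotone decreasing, $1/g$ is operator monotone because inversion reverses the order, hence operator concave, and inverting $g(A\nabla B)^{-1}\ge g(A)^{-1}\nabla g(B)^{-1}$ gives the claim; then $!\le\sigma$, the rescaled Lemma \ref{6}(b), and Ando's inequality $\Phi(X\sigma Y)\le\Phi(X)\sigma\Phi(Y)$ finish exactly as in the paper. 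What your route buys is self-containedness --- its only deep input, L\"owner's theorem, is already needed in part (i) --- and it isolates precisely where ``operator monotone decreasing,'' rather than merely operator convex, is used; your closing remark that operator convexity alone would only produce the $\nabla$-bound, which $\sigma\le\nabla$ pushes the wrong way, is exactly the obstruction that the Ando--Hiai citation circumvents. What the paper's route buys is brevity, given the reference. One pedantic point: your argument needs $g>0$ so that $1/g$ and the harmonic mean are defined, but this is harmless, since a nonconstant operator monotone decreasing function on $[0,\infty)$ is strictly positive, and the paper implicitly makes the same assumption when forming $g(A)\,\sigma\, g(B)$ and related inverses.
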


\begin{proof}
First assume that $st\ge 1$. We observe that 
\begin{equation*}
\begin{aligned}
\Phi \left( f\left( A \right) \right)\tau \Phi \left( f\left( B \right) \right)&\le \Phi \left( f\left( A \right) \right)\nabla \Phi \left( f\left( B \right) \right) \quad \text{(since $\tau \le \nabla $)}\\ 
& =\Phi \left( f\left( A \right)\nabla f\left( B \right) \right) \\ 
& \le \Phi \left( f\left( A\nabla B \right) \right) \quad \text{(by \cite[Corollary 1.12]{2})}\\ 
& \le \Phi \left( f\left( \left( \frac{\sqrt{s}+\sqrt{t}}{2} \right)A\sharp B \right) \right) \quad \text{(by LHS of \eqref{e3})}\\ 
& \le \frac{\sqrt{s}+\sqrt{t}}{2}\Phi \left( f\left( A\sharp B \right) \right)\quad \text{(by Lemma \ref{6} (a))}.  
\end{aligned}
\end{equation*}
On the other hand,
\begin{equation}\label{18}
\begin{aligned}
 \Phi \left( f\left( A\sharp B \right) \right)&\le \Phi \left( f\left( \left( \frac{\sqrt{s}+\sqrt{t}}{2} \right)A!B \right) \right) \quad \text{(by RHS of \eqref{e3})}\\ 
& \le \frac{\sqrt{s}+\sqrt{t}}{2}\Phi \left( f\left( A!B \right) \right) \quad \text{(by Lemma \ref{6} (a))}\\ 
& \le \frac{\sqrt{s}+\sqrt{t}}{2}\Phi \left( f\left( A\sigma B \right) \right)\quad \text{(since $!\le \sigma $)}. 
\end{aligned}
\end{equation}
These two inequalities together imply \eqref{2}. This completes the proof of the case of operator monotone functions and $st\geq 1.$

Now assume that $g$ is operator monotone decreasing. We have
\begin{equation}\label{5}
\begin{aligned}
g\left( A \right)\sigma g\left( B \right)&\ge g\left( A\nabla B \right) \quad \text{(by \cite[Theorem 2.1]{3})}\\ 
& \ge g\left( \left( \frac{\sqrt{s}+\sqrt{t}}{2} \right)A\sharp B \right) \quad \text{(by LHS of \eqref{e3})}\\ 
& \ge \frac{2}{\sqrt{s}+\sqrt{t}}g\left( A\sharp B \right) \quad \text{(by Lemma \ref{6} (b))}. 
\end{aligned}
\end{equation}
On the other hand,
\begin{equation}\label{11}
\begin{aligned}
g\left( A\sharp B \right)&\ge g\left( \left( \frac{\sqrt{s}+\sqrt{t}}{2} \right)A!B \right) \quad \text{(by RHS of \eqref{e3})}\\ 
& \ge \frac{2}{\sqrt{s}+\sqrt{t}}g\left( A!B \right) \quad \text{(by Lemma \ref{6} (b)) }\\ 
& \ge \frac{2}{\sqrt{s}+\sqrt{t}}g\left( A\tau B \right)  \quad \text{(since $!\le \tau $)}.
\end{aligned}
\end{equation}
Combining \eqref{5} and \eqref{11} yields
\[g\left( A\tau B \right)\le \frac{{{\left( \sqrt{s}+\sqrt{t} \right)}^{2}}}{4}\left( g\left( A \right)\sigma g\left( B \right) \right).\]
Applying $\Phi $, we infer that
\[\begin{aligned}
 \Phi \left( g\left( A\tau B \right) \right)&\le \frac{{{\left( \sqrt{s}+\sqrt{t} \right)}^{2}}}{4}\Phi \left( g\left( A \right)\sigma g\left( B \right) \right) \\ 
& \le \frac{{{\left( \sqrt{s}+\sqrt{t} \right)}^{2}}}{4}\Phi \left( g\left( A \right) \right)\sigma \Phi \left( g\left( B \right) \right)\quad \text{(by \cite[Theorem 3]{8})}.  
\end{aligned}\]
This completes the proof for  operator monotone decreasing functions in case $st\geq 1.$

The proof of the case  $st\le 1$ is similar to that $st\ge 1$; except instead of inequality \eqref{12} we use the inequality \eqref{0012}.
\end{proof}

As an application of Theorem \ref{16}, we have the following Gr\"uss type inequalities.
\begin{corollary}\label{013}
		Let $\Phi $ be a positive linear map, $\tau ,\sigma $ operator means such that $!\le \tau ,\sigma \le \nabla $, and let $A,B\in \mathcal{B}\left( \mathcal{H} \right)$ be such that $mI\le A,B\le MI$ for some scalars $0<m<M$.
	\begin{itemize}
		\item[(i)] If $f$ is an operator monotone increasing function on $[0,\infty)$, then
\[\Phi \left( f\left( A \right) \right)\tau \Phi \left( f\left( B \right) \right)-\Phi \left( f\left( A\sigma B \right) \right)\le \frac{{{\left( M-m \right)}^{2}}}{4Mm}f\left( M \right).\]
		\item[(ii)] If $g$ is an operator monotone decreasing function, then
\[\Phi \left( g\left( A\tau B \right) \right)-\Phi \left( g\left( A \right) \right)\sigma \Phi \left( g\left( B \right) \right)\le \frac{{{\left( M-m \right)}^{2}}}{4Mm}g\left( m \right).\]
	\end{itemize}
\end{corollary}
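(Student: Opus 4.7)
The plan is to reduce the corollary to a direct application of Theorem~\ref{16} by choosing the bounds $s = m/M$ and $t = M/m$, which are natural because $mI \le A,B \le MI$ forces
\[
\tfrac{m}{M}A \;\le\; mI \;\le\; B \;\le\; MI \;\le\; \tfrac{M}{m}A,
\]
and then performing a one-line algebraic manipulation of the ``P\'olya--Szeg\H{o} constant'' that transforms the multiplicative bound of Theorem~\ref{16} into an additive Gr\"uss-type bound.

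First I would compute the constant. With $s=m/M$ and $t=M/m$, one has $st=1$, and
\[
\left(\frac{\sqrt{s}+\sqrt{t}}{2}\right)^{2} \;=\; \left(\frac{\sqrt{m/M}+\sqrt{M/m}}{2}\right)^{2} \;=\; \frac{(M+m)^{2}}{4Mm}.
\]
Since $st=1$ both cases of Theorem~\ref{16} give the same factor, so for part (i) inequality~\eqref{2} yields
\[
\Phi\bigl(f(A)\bigr)\,\tau\,\Phi\bigl(f(B)\bigr) \;\le\; \frac{(M+m)^{2}}{4Mm}\,\Phi\bigl(f(A\sigma B)\bigr),
\]
while for part (ii) inequality~\eqref{4} gives the analogous bound with $g(A\tau B)$ on the left and $\Phi(g(A))\sigma\Phi(g(B))$ on the right. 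Subtracting $\Phi(f(A\sigma B))$ (resp.\ $\Phi(g(A))\sigma\Phi(g(B))$) from both sides and using the identity
\[
\frac{(M+m)^{2}}{4Mm}-1 \;=\; \frac{(M-m)^{2}}{4Mm}
\]
puts the inequalities in the desired Gr\"uss shape, modulo bounding the remaining factor $\Phi(f(A\sigma B))$ (resp.\ $\Phi(g(A))\sigma\Phi(g(B))$) above by $f(M)$ (resp.\ $g(m)$).

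For the upper bounds on those leftover factors I would use monotonicity of means together with monotonicity of $f$ and $g$. In part (i), since $A,B\le MI$ and any operator mean $\sigma$ is monotone in both arguments with $MI\sigma MI = MI$, we have $A\sigma B \le MI$, and operator monotonicity of $f$ gives $\Phi(f(A\sigma B)) \le f(M)$ (taking $\Phi$ to be unital, as is standard in this setting). In part (ii), since $A,B\ge mI$ and $g$ is operator monotone \emph{decreasing}, we get $g(A),g(B)\le g(m)I$, and then $\Phi(g(A))\sigma\Phi(g(B))\le g(m)I\sigma g(m)I = g(m)$.

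The only mild obstacle is bookkeeping: one must verify that $st=1$ places us in (either branch of) Theorem~\ref{16} with the clean constant $(M+m)^2/(4Mm)$, and that the operator mean $\sigma$ respects the upper bound $A\sigma B\le MI$, which is immediate from the fact that the identity $eI\sigma eI=eI$ holds for any operator mean and any scalar $e>0$, together with joint monotonicity of $\sigma$. With these observations in hand, both inequalities of Corollary~\ref{013} follow in a few lines.
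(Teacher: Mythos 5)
Your proposal is correct and follows essentially the same route as the paper's proof: specialize Theorem \ref{16} to $s=\frac{m}{M}$, $t=\frac{M}{m}$ (yielding the constant $\frac{(M+m)^2}{4Mm}$), subtract, use the identity $\frac{(M+m)^{2}}{4Mm}-1=\frac{(M-m)^{2}}{4Mm}$, and bound the leftover factor by $f(M)$ (resp.\ $g(m)$) via $mI\le A\sigma B\le MI$ and monotonicity. Your explicit observation that $\Phi$ must be (sub)unital for the final bound $\Phi\left( f\left( A\sigma B \right) \right)\le f\left( M \right)I$ is a hypothesis the paper's proof uses only implicitly, but it does not alter the structure of the argument.
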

\begin{proof}
	It follows from Theorem \ref{16} (i) that
	\begin{equation}\label{9}
\Phi \left( f\left( A \right) \right)\tau \Phi \left( f\left( B \right) \right)\le \frac{{{\left( M+m \right)}^{2}}}{4Mm}\Phi \left( f\left( A\sigma B \right) \right).	
	\end{equation}
Hence
\[\begin{aligned}
 \Phi \left( f\left( A \right) \right)\tau \Phi \left( f\left( B \right) \right)-\Phi \left( f\left( A\sigma B \right) \right)&\le \left( \frac{{{\left( M+m \right)}^{2}}}{4Mm}-1 \right)\Phi \left( f\left( A\sigma B \right) \right) \\ 
& \le \left( \frac{{{\left( M+m \right)}^{2}}}{4Mm}-1 \right)f\left( M \right)  
\end{aligned}\]
where in the first inequality we used \eqref{9} and the second inequality follows from the fact that $f\left( m \right)I\le f\left( A\sigma B \right)\le f\left( M \right)I$.

The other case can be obtained similarly by utilizing Theorem \ref{16} (ii).
\end{proof}
In \cite[Theorem 3]{5}, the inequality
\begin{equation}\label{mia_thm3}
\frac{\|g(A)\sharp g(B)\|}{\|A\sharp B\|}\leq 2S(h)^2\left\|\frac{g(A\sharp B)}{A\sharp B}\right\|
\end{equation}
was proved for the positive matrices $A,B$ satisfying $mI\leq A,B\leq MI$, the operator convex function $g:[0,\infty)\to [0,\infty)$ satisfying $g(0)=0$ and the Specht's ratio $S(h).$ Following the same ideas as in \cite{5} one can prove the following more general form, which then implies a refinement of \eqref{mia_thm3}.
\begin{corollary}\label{7}
Let $A,B\in \mathcal{B}\left( \mathcal{H} \right)$ be such that $s A \leq B \leq t A$ for some scalars $0 < s \leq t$ with $st \geq 1$, and let $g$ be an operator convex function with $g\left( 0 \right)=0$. Then for for any $\tau \ge \sharp$, $\sigma \le \sharp$ and for any unitarily invariant norm ${{\left\| \cdot \right\|}_{u}}$, 
\begin{equation}\label{10}
\frac{||g(A) \tau g(B))||_u}{|| A \tau B||_u} \le \left (\frac{\sqrt{s} + \sqrt{t}}{2}\right )^2 \left | \left| \frac{g(A \sharp B)}{A\sharp B} \right | \right |_u,
\end{equation}
and
\begin{equation}\label{011}
\frac{||g(A) \sharp g(B))||_u}{|| A \sharp B||_u} \le \left (\frac{\sqrt{s} + \sqrt{t}}{2}\right )^2 \left | \left| \frac{g(A \sigma B)}{A\sigma B} \right | \right |_u.
\end{equation}
In particular, if $!\le \tau ,\sigma \le \nabla $,
\begin{equation*}
\frac{||g(A) \tau g(B))||_u}{|| A \tau B||_u} \le \left (\frac{\sqrt{s} + \sqrt{t}}{2}\right )^4 \left | \left| \frac{g(A \sigma B)}{A\sigma B} \right | \right |_u.
\end{equation*}
\end{corollary}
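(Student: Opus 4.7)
The plan is to adapt the approach of \cite[Theorem 3]{5}, substituting the Specht--Kantorovich constant $2 S(h)^2$ used there by the sharper quantity $\alpha^2 := \left( \frac{\sqrt{s}+\sqrt{t}}{2} \right)^2$ that Lemma \ref{e3}(a) supplies under $st \ge 1$. Write $C := A \sharp B$ and $h(x) := g(x)/x$. Since $g$ is operator convex on $[0,\infty)$ with $g(0) = 0$, the Hansen--Pedersen characterization gives that $h$ is non-negative and operator monotone on $[0,\infty)$, and $g(X) = X^{1/2} h(X) X^{1/2}$ for every positive invertible $X$.

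The first step is to derive the operator sandwich $\tfrac{1}{\alpha} C \le A \tau B \le \alpha C$: the right bound follows from $A \tau B \le A \nabla B \le \alpha C$ (using $\tau \le \nabla$ and Lemma \ref{e3}(a)), the left from $A \tau B \ge A ! B \ge \tfrac{1}{\alpha} C$. Applying the operator monotone $h$ to the right inequality and then Lemma \ref{6}(a) yields $h(A \tau B) \le h(\alpha C) \le \alpha h(C) \le \alpha \| h(C) \|_\infty I$, whence
\[
g(A \tau B) = (A \tau B)^{1/2} h(A \tau B) (A \tau B)^{1/2} \le \alpha \, \| h(C) \|_\infty \, (A \tau B).
\]

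The second step is a reverse operator convexity estimate: the sandwich $sA \le B \le tA$ together with operator convexity of $g$ and $g(0)=0$ should yield $g(A) \tau g(B) \le \alpha \, g(A \tau B)$. Chaining the two operator inequalities gives $g(A) \tau g(B) \le \alpha^2 \, \| h(C) \|_\infty \, (A \tau B)$, and taking a unitarily invariant norm, using $\| X^{1/2} Y X^{1/2} \|_u \le \| Y \|_\infty \| X \|_u$ for positive $X,Y$ and $\| h(C) \|_\infty \le \| h(C) \|_u = \| g(C)/C \|_u$, delivers \eqref{10}. The inequality \eqref{011} follows by the same argument with $\sharp$ replacing $\tau$ and the sandwich $\tfrac{1}{\alpha} C \le A \sigma B \le C$ coming from $! \le \sigma \le \sharp$. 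The chained estimate under $! \le \tau, \sigma \le \nabla$ is then obtained by combining \eqref{10} with an additional monotonicity step $\| h(C) \|_u \le \alpha^2 \, \| h(A \sigma B) \|_u$ based on $A \sigma B \ge \tfrac{1}{\alpha} C$, supplying the remaining factor and yielding the fourth-power constant.

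The main obstacle will be the reverse operator convexity step $g(A) \tau g(B) \le \alpha \, g(A \tau B)$. Unlike the easy direction $g(A \tau B) \le g(A) \tau g(B)$, which is Ando's operator convexity inequality for Kubo--Ando means, this reverse bound genuinely requires the sandwich hypothesis $sA \le B \le tA$ and does not follow from the operator monotonicity of $h$ alone; either a Kantorovich-style manipulation at the operator level, or a reduction to the scalar case on the joint spectrum via the representing function of $\tau$ as is done in \cite{5}, is required to extract this remaining factor of $\alpha$.
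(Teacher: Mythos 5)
Your decomposition via $h(x)=g(x)/x$ and the sandwich $\frac{1}{\alpha}(A\sharp B)\le A\tau B\le \alpha (A\sharp B)$, with $\alpha=\frac{\sqrt{s}+\sqrt{t}}{2}$, is sound as far as it goes, and you were right to flag the ``reverse operator convexity'' step $g(A)\tau g(B)\le \alpha\, g(A\tau B)$ as the main obstacle. But this gap cannot be closed: that step is false, and so is the inequality \eqref{10} it is meant to deliver. Take $g(x)=x^{2}$ (operator convex, non-negative, $g(0)=0$), $A=I$, $B=4I$, $\tau=\nabla$ (which satisfies $\tau\ge\sharp$), and the operator norm. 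The hypothesis $sA\le B\le tA$ holds with $s=\frac{1}{4}$, $t=4$, so $st=1$ and $\alpha^{2}=\frac{25}{16}$. Then $A\sharp B=2I$, and
\begin{equation*}
\frac{\left\| g(A)\nabla g(B) \right\|}{\left\| A\nabla B \right\|}=\frac{17/2}{5/2}=3.4
\qquad\text{while}\qquad
\alpha^{2}\left\| \frac{g(A\sharp B)}{A\sharp B} \right\|=\frac{25}{16}\cdot 2=3.125,
\end{equation*}
so \eqref{10} fails; your intermediate step fails on the same data, since $g(A)\nabla g(B)=\frac{136}{16}I$ while $\alpha\, g(A\nabla B)=\frac{5}{4}\cdot\frac{25}{4}I=\frac{125}{16}I$. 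The structural reason no Kantorovich-style manipulation can rescue it: enlarging the window $[s,t]$ downward subject to $st\ge 1$ (here from the tight $s=t=4$, where $\alpha^{2}=4$, to $s=\frac{1}{4}$, $t=4$, where $\alpha^{2}=\frac{25}{16}$) makes the claimed constant \emph{smaller} while neither side of \eqref{10} moves; and convexity of $g$ stretches the spread between $g(A)$ and $g(B)$ beyond what $[s,t]$ controls (for $x^{2}$ the relevant window is $[s^{2},t^{2}]$).

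For what it is worth, the paper's own proof collapses at the corresponding point, so your attempt located exactly the flaw the paper glosses over. The paper invokes Theorem \ref{16} for $g$, but Theorem \ref{16} requires an operator monotone function, and a non-negative operator convex $g$ with $g(0)=0$ (such as $x^{2}$) is increasing yet generally not operator monotone; the subsequent ``consequently'' step, which divides the right-hand side by $\|A\sharp B\|_{u}$, is likewise unjustified. Two true statements are within reach of your machinery (write $C=A\sharp B$): (i) if $g$ is assumed operator \emph{monotone} rather than operator convex, then Theorem \ref{16} with $\Phi$ the identity and $\sigma=\sharp$, together with $\|A\tau B\|_{u}\ge\|C\|_{u}$ (from $\tau\ge\sharp$) and $\|g(C)\|_{u}\le\left\|\frac{g(C)}{C}\right\|_{\infty}\|C\|_{u}$, gives \eqref{10} verbatim; (ii) for operator convex $g\ge 0$ with $g(0)=0$, your bounds $g(A)\le\|h(A)\|_{\infty}A$ and $g(B)\le\|h(B)\|_{\infty}B$, combined with $A\le s^{-1/2}C$, $B\le t^{1/2}C$ and Lemma \ref{6}(a), prove \eqref{10} for \emph{every} operator mean $\tau$ but with the larger constant $\max\{s^{-1/2},t^{1/2}\}$ in place of $\alpha^{2}$ (in the example above this reads $3.4\le 2\cdot 2$, consistent). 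Either repair changes the statement, not just the proof.
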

\begin{proof}
By Theorem \ref{16},
$$
\frac{||g(A) \tau g(B))||_u}{|| A \tau B||_u} \le \left(\frac{\sqrt{s} + \sqrt{t}}{2}\right)^2 || g(A \sharp B)||_u.
% \quad ||A \tau B||_u \ge \left (\frac{2}{\sqrt{s} + \sqrt{t}}\right)^2 ||A \sigma B||_u.
$$ 
Consequently, the following double inequality is valid:
$$
\frac{||g(A) \tau g(B))||_u}{|| A \tau B||_u}  \le \left (\frac{\sqrt{s} + \sqrt{t}}{2}\right )^2 \frac{||g(A \sharp B))||_u}{|| A \sharp B||_u}  \le \left (\frac{\sqrt{s} + \sqrt{t}}{2}\right )^2 \left\| \dfrac{ g\left( A\sharp B \right) }{A\sharp B }\right\|_u.
$$

The second inequality is obtained by similar arguments.
\end{proof} 
The case $st\le 1$ in Corollary \ref{7} is also valid if we employ inequality \eqref{0012} instead \eqref{12}.
\begin{remark}
In the special cases when $s=\frac{m}{M}$, $t=\frac{M}{m}$, and $\tau =\sharp$ (resp. $\sigma =\sharp$), \eqref{10} (resp. \eqref{011}) reduces to
\begin{equation}\label{15}
\dfrac{\left\| g\left( A \right) \sharp g\left( B \right) \right\|_u}{\left\| A\sharp B \right\|_u} \leq 2\left(	\frac{M+m}{2\sqrt{Mm}}\right)^2 \left\| \dfrac{ g\left( A\sharp B \right) }{A\sharp B }\right\|_u.
\end{equation}
This shows that the inequality \eqref{15} is a refinement of \cite[Theorem 3]{5}. 	To see that \eqref{15} is a refinement of \cite[Theorem 3]{5}, one has to recall that $\frac{M+m}{2}\le S\left( \frac{M}{m} \right)\sqrt{Mm}$ (see \cite{4}).
\end{remark}

\begin{remark}
	By choosing $\Phi $ as an identity map, $s=\frac{m}{M}$, $t=\frac{M}{m}$, and $\tau =\sigma =\sharp$ in \eqref{18} and \eqref{5}, we have the following two cases:
	\begin{itemize}
	\item[(i)] If $f$ is an operator monotone increasing function, then
	\begin{equation}\label{1}
	f\left( A \right)\sharp f\left( B \right)\le \frac{M+m}{2\sqrt{Mm}}f\left( A\sharp B \right).
	\end{equation}
	\item[(ii)] If $g$ is an operator monotone decreasing function, then
	\begin{equation}\label{8}
	g\left( A\sharp B \right)\le \frac{M+m}{2\sqrt{Mm}}\left( g\left( A \right)\sharp g\left( B \right) \right).
	\end{equation}
\end{itemize}
 As  mentioned in \cite[Theorem 6]{iz}, if $A,B\in \mathcal{B}\left( \mathcal{H} \right)$ are two positive operators such that $A\le B$ and $mI\le A\le MI$ for some scalars $0<m<M$, then
\[{{A}^{2}}\le \frac{{{\left( M+m \right)}^{2}}}{4Mm}{{B}^{2}}.\] 
Now, by the substitutions $A\to f\left( A \right)\sharp f\left( B \right)$ and $B\to \frac{M+m}{2\sqrt{Mm}}f\left( A\sharp B \right)$ in the above discussion, we get
\[{{\left( f\left( A \right)\sharp f\left( B \right) \right)}^{2}}\le {{\left( \frac{{{\left( M+m \right)}^{2}}}{4Mm} \right)}^{2}}f{{\left( A\sharp B \right)}^{2}}.\]
A similar approach gives 
\[g{{\left( A\sharp B \right)}^{2}}\le {{\left( \frac{{{\left( M+m \right)}^{2}}}{4Mm} \right)}^{2}}{{\left( g\left( A \right)\sharp g\left( B \right) \right)}^{2}}.\]
\end{remark}

We conclude this article by showing operator Diaz--Metcalf and  Klamkin--McLenaghan inequalities.
\begin{theorem}
Let $\Phi $ be a positive linear map, $\tau ,\sigma $ operator means such that $!\le \tau ,\sigma \le \nabla $, and let $A,B\in \mathcal{B}\left( \mathcal{H} \right)$ such that $s A \leq B \leq t A$ for some scalars $0 < s \leq t$. If $f$ is a non-negative operator monotone function, then
\begin{itemize}
	\item (Operator   Diaz--Metcalf type inequality)
	\begin{equation}\label{21}
\Phi \left( f\left( \sqrt{st}A \right) \right)\tau \Phi \left( f\left( B \right) \right)\le {{\left( \frac{\sqrt{s}+\sqrt{t}}{2} \right)}^{2}}\Phi \left( f\left( A\sigma B \right) \right)
	\end{equation}
whenever $\sqrt{st}\ge 1$.	
	\[\Phi \left( f\left( \sqrt{st}A \right) \right)\tau \Phi \left( f\left( B \right) \right)\le \frac{{{\left( \sqrt{s}+\sqrt{t} \right)}^{2}}}{4\sqrt{st}}\Phi \left( f\left( A\sigma B \right) \right)\]
whenever $\sqrt{st}\le 1$.		
	\item (Operator   Klamkin--McLenaghan type inequality)
	\begin{equation}\label{22}
	\begin{aligned}
	& \Phi {{\left( f\left( A\sigma B \right) \right)}^{-\frac{1}{2}}}\Phi \left( f\left( B \right) \right)\Phi {{\left( f\left( A\sigma B \right) \right)}^{-\frac{1}{2}}}-\Phi {{\left( f\left( A\sigma B \right) \right)}^{\frac{1}{2}}}\Phi {{\left( f\left( \sqrt{st}A \right) \right)}^{-1}}\Phi {{\left( f\left( A\sigma B \right) \right)}^{\frac{1}{2}}} \\ 
	& \le \frac{{{\left( \sqrt{s}+\sqrt{t} \right)}^{2}}}{2}-2I-\left( {{\left( \Phi {{\left( f\left( A\sigma B \right) \right)}^{-\frac{1}{2}}}\Phi \left( f\left( \sqrt{st}A \right) \right)\Phi {{\left( f\left( A\sigma B \right) \right)}^{-\frac{1}{2}}} \right)}^{\frac{1}{2}}} \right. \\ 
	&\qquad {{\left. -{{\left( \Phi {{\left( f\left( A\sigma B \right) \right)}^{-\frac{1}{2}}}\Phi \left( f\left( \sqrt{st}A \right) \right)\Phi {{\left( f\left( A\sigma B \right) \right)}^{-\frac{1}{2}}} \right)}^{-\frac{1}{2}}} \right)}^{2}}
	\end{aligned}
	\end{equation}
whenever $\sqrt{st}\ge 1$.		
\begin{equation}\label{14}
\begin{aligned}
& \Phi {{\left( f\left( A\sigma B \right) \right)}^{-\frac{1}{2}}}\Phi \left( f\left( B \right) \right)\Phi {{\left( f\left( A\sigma B \right) \right)}^{-\frac{1}{2}}}-\Phi {{\left( f\left( A\sigma B \right) \right)}^{\frac{1}{2}}}\Phi {{\left( f\left( \sqrt{st}A \right) \right)}^{-1}}\Phi {{\left( f\left( A\sigma B \right) \right)}^{\frac{1}{2}}} \\ 
& \le \frac{{{\left( \sqrt{s}+\sqrt{t} \right)}^{2}}}{2\sqrt{st}}-2I-\left( {{\left( \Phi {{\left( f\left( A\sigma B \right) \right)}^{-\frac{1}{2}}}\Phi \left( f\left( \sqrt{st}A \right) \right)\Phi {{\left( f\left( A\sigma B \right) \right)}^{-\frac{1}{2}}} \right)}^{\frac{1}{2}}} \right. \\ 
&\qquad {{\left. -{{\left( \Phi {{\left( f\left( A\sigma B \right) \right)}^{-\frac{1}{2}}}\Phi \left( f\left( \sqrt{st}A \right) \right)\Phi {{\left( f\left( A\sigma B \right) \right)}^{-\frac{1}{2}}} \right)}^{-\frac{1}{2}}} \right)}^{2}} 
\end{aligned}
\end{equation}
whenever $\sqrt{st}\le 1$.
\end{itemize}
\end{theorem}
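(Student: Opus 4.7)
The plan is to prove the Diaz--Metcalf inequality \eqref{21} by adapting the chain of inequalities in the proof of Theorem \ref{16}, and then to deduce the Klamkin--McLenaghan inequality \eqref{22} from \eqref{21} by a purely algebraic rearrangement.

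For \eqref{21}, the new ingredient I would establish is the operator inequality
\[\sqrt{st}\,A\nabla B \;\le\; \tfrac{\sqrt{s}+\sqrt{t}}{2}(A\sharp B),\]
which follows by functional calculus from the elementary scalar identity $\tfrac{\sqrt{s}+\sqrt{t}}{2}\sqrt{x}-\tfrac{\sqrt{st}+x}{2} = -\tfrac{1}{2}(\sqrt{x}-\sqrt{s})(\sqrt{x}-\sqrt{t})\ge 0$ on $[s,t]$; this is genuinely a new scalar inequality, not any of the three already recorded in Lemma \ref{e3}. Assuming $\sqrt{st}\ge 1$, I then copy the chain used in Theorem \ref{16}, with $A$ replaced by $\sqrt{st}\,A$ on the left of each arithmetic mean:
\[\Phi(f(\sqrt{st}A))\tau\Phi(f(B))\le \Phi\!\bigl(f(\sqrt{st}A)\nabla f(B)\bigr)\le \Phi\!\bigl(f(\sqrt{st}A\nabla B)\bigr)\le \Phi\!\Bigl(f\bigl(\tfrac{\sqrt{s}+\sqrt{t}}{2}A\sharp B\bigr)\Bigr),\]
and then invoke Lemma \ref{6}(a) (legitimate because $st\ge 1$ forces $\tfrac{\sqrt{s}+\sqrt{t}}{2}\ge 1$ by AM--GM), the right half of \eqref{12}, Lemma \ref{6}(a) once more, and $!\le\sigma$, exactly as in the original proof. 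The case $\sqrt{st}\le 1$ uses the same opening two steps, but I would continue via \eqref{0012} to reach $\tfrac{\sqrt{s}+\sqrt{t}}{2}A\sharp B\le \tfrac{(\sqrt{s}+\sqrt{t})^{2}}{4\sqrt{st}}A!B$, and then apply Lemma \ref{6}(a) with $\alpha=\tfrac{(\sqrt{s}+\sqrt{t})^{2}}{4\sqrt{st}}\ge 1$ (the condition $\alpha\ge 1$ is again just AM--GM).

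For the Klamkin--McLenaghan inequality \eqref{22}, I would specialise \eqref{21} to $\tau=\nabla$ and then rearrange. Set $X:=\Phi(f(A\sigma B))$, $Y:=\Phi(f(B))$, $Z:=\Phi(f(\sqrt{st}A))$, and $W:=X^{-1/2}ZX^{-1/2}$. Since $W^{1/2}$ and $W^{-1/2}$ commute, one has $(W^{1/2}-W^{-1/2})^{2}=W+W^{-1}-2I$, and since $W^{-1}=X^{1/2}Z^{-1}X^{1/2}$, expanding the right-hand side of \eqref{22} causes the $-W^{-1}$ and the $\pm 2I$ terms to cancel in pairs, reducing \eqref{22} to
\[X^{-1/2}(Y+Z)X^{-1/2}\;\le\;\tfrac{(\sqrt{s}+\sqrt{t})^{2}}{2}I,\]
equivalently $Y+Z\le\tfrac{(\sqrt{s}+\sqrt{t})^{2}}{2}X$, which is exactly twice the Diaz--Metcalf inequality \eqref{21} at $\tau=\nabla$. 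The analogous rewriting turns \eqref{14} into twice the $\sqrt{st}\le 1$ form of \eqref{21}.

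The main obstacle is not analytical but notational: one must identify the correct new scalar inequality in Step 1, and then see through the imposing display in \eqref{22} to recognise that, after the cancellations, it is merely a restatement of $\Phi(f(\sqrt{st}A))+\Phi(f(B))\le \tfrac{(\sqrt{s}+\sqrt{t})^{2}}{2}\Phi(f(A\sigma B))$, at which point no further work is required beyond Diaz--Metcalf.
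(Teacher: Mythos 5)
Your proposal is correct and follows essentially the same route as the paper: your scalar factorization $\tfrac{\sqrt{s}+\sqrt{t}}{2}\sqrt{x}-\tfrac{\sqrt{st}+x}{2}=-\tfrac{1}{2}(\sqrt{x}-\sqrt{s})(\sqrt{x}-\sqrt{t})\ge 0$ is precisely how the paper obtains its key estimate $\tfrac{\sqrt{st}A+B}{2}\le\tfrac{\sqrt{s}+\sqrt{t}}{2}A\sharp B$, the subsequent chain (operator concavity, Lemma \ref{e3}, Lemma \ref{6}(a), $!\le\sigma$, $\tau\le\nabla$) is the paper's chain with the constants merely extracted in a different order, and your backward reduction of \eqref{22} to $\Phi\left( f\left( \sqrt{st}A \right) \right)+\Phi \left( f\left( B \right) \right)\le \tfrac{(\sqrt{s}+\sqrt{t})^{2}}{2}\Phi \left( f\left( A\sigma B \right) \right)$ is the paper's forward derivation via the identity $T+T^{-1}=\left(T^{\frac{1}{2}}-T^{-\frac{1}{2}}\right)^{2}+2I$, read in reverse. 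The differences are purely presentational, so nothing further is needed.
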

\begin{proof}
We assume $st\ge 1$. From the assumption $sA\leq B\leq tA$, it follows that $\sqrt{s}\le {{\left( {{A}^{-\frac{1}{2}}}B{{A}^{-\frac{1}{2}}} \right)}^{\frac{1}{2}}}\le \sqrt{t}$. Therefore, 
\begin{equation}\label{23}
\frac{\sqrt{st}A+B}{2}\le \left( \frac{\sqrt{s}+\sqrt{t}}{2} \right)A\sharp B.
\end{equation}
Now, since $f$ is an operator monotone increasing we have
\[\begin{aligned}
 \frac{f\left( \sqrt{st}A \right)+f\left( B \right)}{2}&\le f\left( \frac{\sqrt{st}A+B}{2} \right) \quad \text{(by \cite[Corollary 1.12]{2})}\\ 
& \le f\left( \left( \frac{\sqrt{s}+\sqrt{t}}{2} \right)A\sharp B \right) \quad \text{(by \eqref{23})}\\ 
& \le f\left( {{\left( \frac{\sqrt{s}+\sqrt{t}}{2} \right)}^{2}}A!B \right) \quad \text{(by RHS of \eqref{12})}\\ 
& \le {{\left( \frac{\sqrt{s}+\sqrt{t}}{2} \right)}^{2}}f\left( A!B \right) \quad \text{(by Lemma \ref{6}(a))}\\ 
& \le {{\left( \frac{\sqrt{s}+\sqrt{t}}{2} \right)}^{2}}f\left( A\sigma B \right) \quad \text{(since $!\le \sigma $)}.  
\end{aligned}\]
It follows from the linearity of $\Phi$ and the fact $\tau \le \nabla $ that
\[\begin{aligned}
 \Phi \left( f\left( \sqrt{st}A \right) \right)\tau \Phi \left( f\left( B \right) \right)&\le \frac{\Phi \left( f\left( \sqrt{st}A \right) \right)+\Phi \left( f\left( B \right) \right)}{2} \\ 
& \le {{\left( \frac{\sqrt{s}+\sqrt{t}}{2} \right)}^{2}}\Phi \left( f\left( A\sigma B \right) \right).  
\end{aligned}\]
So we have \eqref{21}. The case $st\le 1$ can be obtained similarly.

From \eqref{21} we easily infer that
\begin{equation}\label{24}
\Phi \left( f\left( \sqrt{st}A \right) \right)+\Phi \left( f\left( B \right) \right)\le \frac{{{\left( \sqrt{s}+\sqrt{t} \right)}^{2}}}{2}\Phi \left( f\left( A\sigma B \right) \right).
\end{equation}
The estimate \eqref{24} guarantees
\[\begin{aligned}
& \Phi {{\left( f\left( A\sigma B \right) \right)}^{-\frac{1}{2}}}\Phi \left( f\left( B \right) \right)\Phi {{\left( f\left( A\sigma B \right) \right)}^{-\frac{1}{2}}} \\ 
& \le \frac{{{\left( \sqrt{s}+\sqrt{t} \right)}^{2}}}{2}-\Phi {{\left( f\left( A\sigma B \right) \right)}^{-\frac{1}{2}}}\Phi \left( f\left( \sqrt{st}A \right) \right)\Phi {{\left( f\left( A\sigma B \right) \right)}^{-\frac{1}{2}}}.
\end{aligned}\]
We set
\[\begin{aligned}
& X:=\Phi {{\left( f\left( A\sigma B \right) \right)}^{-\frac{1}{2}}}\Phi \left( f\left( B \right) \right)\Phi {{\left( f\left( A\sigma B \right) \right)}^{-\frac{1}{2}}} \\ 
&\qquad -\Phi {{\left( f\left( A\sigma B \right) \right)}^{\frac{1}{2}}}\Phi {{\left( f\left( \sqrt{st}A \right) \right)}^{-1}}\Phi {{\left( f\left( A\sigma B \right) \right)}^{\frac{1}{2}}}
\end{aligned}\]
and observe
\begin{equation}\label{17}
X\le \frac{{{\left( \sqrt{s}+\sqrt{t} \right)}^{2}}}{2}-T-{{T}^{-1}}
\end{equation}
where $T=\Phi {{\left( f\left( A\sigma B \right) \right)}^{-\frac{1}{2}}}\Phi \left( f\left( \sqrt{st}A \right) \right)\Phi {{\left( f\left( A\sigma B \right) \right)}^{-\frac{1}{2}}}$. Notice that
\begin{equation}\label{20}
T+{{T}^{-1}}={{\left( {{T}^{\frac{1}{2}}}-{{T}^{-\frac{1}{2}}} \right)}^{2}}+2I.
\end{equation}
Combining \eqref{17}and \eqref{20} we get
\[X\le \frac{{{\left( \sqrt{s}+\sqrt{t} \right)}^{2}}}{2}-{{\left( {{T}^{\frac{1}{2}}}-{{T}^{-\frac{1}{2}}} \right)}^{2}}-2I.\] 
which is equivalent with the  inequality \eqref{22}. The inequality \eqref{14} is obtained by similar arguments.
\end{proof}
\begin{remark}
Assume $\sqrt{st}\ge 1$. Due to the monotonicity property of  operator means, we have
\[\Phi \left( f\left( A \right) \right)\tau \Phi \left( f\left( B \right) \right)\le \Phi \left( f\left( \sqrt{st}A \right) \right)\tau \Phi \left( f\left( B \right) \right)\le {{\left( \frac{\sqrt{s}+\sqrt{t}}{2} \right)}^{2}}\Phi \left( f\left( A\sigma B \right) \right)\]
which is stronger than \eqref{2}.
\end{remark}

\vskip 0.4 true cm

\tiny(Trung Hoa Dinh)  Division of Computational Mathematics and Engineering, Institute for Computational Science, Ton Duc Thang University, Ho Chi Minh City, Vietnam; \\ Faculty of Civil Engineering, Ton Duc Thang University, Ho Chi Minh City, Vietnam. \\
Department of Mathematics and Statistics, University of North Florida, USA.

{\it E-mail address:} dinhtrunghoa@tdt.edu.vn

\vskip 0.4 true cm

\tiny(Hamid Reza Moradi) Young Researchers and Elite Club, Mashhad Branch, Islamic Azad University, Mashhad, Iran.

{\it E-mail address:} hrmoradi@mshdiau.ac.ir

\vskip 0.4 true cm

\tiny(Mohammad Sababheh) Department of Basic Sciences, Princess Sumaya University for Technology, Amman 11941, Jordan.

{\it E-mail address:} sababheh@yahoo.com; sababheh@psut.edu.jo
%-----------------------------------------------------------------------------
%-----------------------------------------------------------------------------
\end{document}